\numberwithin{equation}{section}
\theoremstyle{plain}
\newtheorem{theorem}{Theorem}
\newtheorem{lemma}{Lemma}
\theoremstyle{definition}
\newtheorem{df}{Definition}
\newtheorem{rem}{Remark}
\newcommand{\rbmo}{\mathrm{RBMO}}
\newcommand{\krl}{\mathcal{K}}
\newcommand{\KK}{K}
\newcommand{\dist}{d}
\newcommand{\rd}{\mathbb{R}^m}
\newcommand{\al}{\alpha}
\newcommand{\dou}{\mathcal{D}}
\newcommand{\er}{\varepsilon}
\newcommand{\ph}{\varphi}
\newcommand{\Rbb}{\mathbb R}
\newcommand{\qe}{S}
\begin{document}

\title[A characterization of Calder\'on--Zygmund operators]{A characterization of Calder\'on--Zygmund operators on RBMO}
\author{Evgueni Doubtsov}
\address{St.~Petersburg Department
of Steklov Mathematical Institute, Fontanka 27, St.~Petersburg
191023, Russia}
\email{dubtsov@pdmi.ras.ru}

\author{Andrei V. Vasin}
\address{St.~Petersburg Department
of Steklov Mathematical Institute, Fontanka 27, St.~Petersburg
191023, Russia and
Admiral Makarov State University of Maritime and Inland Shipping,
Dvinskaya st.~5/7, St.~Petersburg 198035, Russia}

\email{andrejvasin@gmail.com}

\thanks{This research was supported by the Russian Science Foundation (grant No.~23-11-00171),
https://rscf.ru/project/23-11-00171/.}

\keywords{T1 theorem, non-doubling measure, regular BMO space}

\begin{abstract}
Let $\mathrm{RBMO}(\mu) = \mathrm{RBMO}(\mathbb{R}^m, \mu)$ 
denote the regular BMO space introduced by X.~Tolsa
for an $n$-dimensional finite positive measure on $\mathbb{R}^m$, $0<n \le m$.
We characterize the bounded Calder\'on--Zygmund operators
$T:\rbmo(\mu) \to \rbmo(\mu)$
in terms of the function $T1$.
\end{abstract}

\maketitle

\section{Introduction}

The main results of classical harmonic analysis are related to homogeneous spaces $(\rd, \mu)$;
see, for example, monograph \cite{St93}.
By definition, this means that 
$\mu$ has the standard doubling property $\mu(2Q) \le C \mu(Q)$
for an arbitrary cube $Q\subset \rd$.
The principal motivation of the present work is to consider the Calder\'on--Zygmund operators in a non-classical setting, 
that is, on non-homogeneous spaces.
Indeed, the main object in this work is the functional space $\rbmo(\rd, \mu)$
introduced by X.\,Tolsa \cite{T} for an $n$-dimensional measure $\mu$.
Thus, the measure $\mu$ may not have the standard doubling property.
Other generalizations of the space $\mathrm{BMO}$ were also considered
for such measures;
see, for example, \cite{MMNO2k, NTV03}.
However, $\rbmo(\rd, \mu)$ has various genuine properties of the classical space $\mathrm{BMO}$, 
for example, the John--Nirenberg inequality holds for $\rbmo(\rd, \mu)$.

Let $T$ be a Calder\'on--Zygmund operator
such that its kernel satisfies the appropriate cancellation property
(see \eqref{e_cz2}).
In \cite{DV23PAMS}, the authors obtained a condition on the function $T1$, which 
guarantees that the operator $T$ is bounded on
$\rbmo(\mu) = \rbmo(\rd, \mu)$.
In this work, we prove that the condition from \cite{DV23PAMS},
after a slight modification, is not only sufficient, but also necessary for the boundedness of $T$.
In other words, in the present paper,
we obtain a so-called $T1$ theorem for the operator $T$ on $\rbmo(\mu)$,
that is, we give a characterization of the bounded Calder\'on--Zygmund operators
$T:\rbmo(\mu) \to \rbmo(\mu)$
in terms of the function $T1$.

We introduce certain definitions and facts from \cite{T} and \cite{DV23PAMS}.

\subsection{Cubes and $n$-dimensional measures}
Unless otherwise stated, everywhere below $\mu$ is a finite positive measure defined on $\rd$.
By definition, cube is a closed cube in $\rd$, its sides are parallel to the coordinate axes
and its center belongs to the support of $\mu$. 
The side length of $Q$ is denoted by $\ell=\ell(Q)$.

As in \cite{T}, we always assume that $\mu$ is an $n$-dimensional measure on $\rd$, where
$0< n \le m$.
By definition, it means that
\begin{equation}  \label{e_ndim}
\mu(Q) \le C \ell^n(Q)\ \textrm{for every cube\ } Q\subset\rd,\ \ell(Q)>0,
 \end{equation}
with a constant $C>0$.

\subsection{Calder\'on--Zygmund operators.} \, 
A Calder\'on--Zygmund kernel associated with an $n$-dimensional measure $\mu$ on $\rd$
is a measurable function
$\krl (x, y)$ defined on $\rd\times \rd\setminus\{(x, x): x\in \rd\}$
and satisfying the following conditions:
\begin{equation}\label{e_cz1}
|\krl(x, y)|
\le C \dist^{-n}(x,y),
\end{equation}
\begin{equation}\label{e_cz3}
\aligned
 |\krl(x_1, y)- \krl(x_2, y)|
&+ |\krl(y, x_1) - \krl(y, x_2)| \\
&\le C\frac{\dist^\delta(x_1, x_2)}{\dist^{n+\delta}(x_1, y)}, \quad\textrm{whenever}\ \ 2\dist(x_1, x_2)\le \dist(x_1, y),
\endaligned
\end{equation}
where $C>0$ is a universal constant and
$\delta$, $0 < \delta\le 1$, is a regularity constant.
Restrictions \eqref{e_cz1} and \eqref{e_cz3} are standard by now.
Also, remark that property~\eqref{e_cz3} 
is used in the celebrated $T1$ theorem of David and Journ\'e \cite{DJ84}.

Also, in the main theorem of the present paper, we use the following more specific
cancellation property:
\begin{equation}\label{e_cz2}
\left|\int_{Q(x, R)\setminus Q(x, r)} \krl(x, y)\, d\mu(y) \right|
\le C, \quad 0<r<R,
\end{equation}
where $C>0$ is a universal constant. 
This type of restriction is sufficiently common 
(see, for example, \cite[Ch.~7, Exercise.~5.17]{St93} in the classical setting,
see also \cite{Ga09} in the case of nonhomogeneous spaces).

The Calder\'on--Zygmund operator associated to the kernel $\krl (x, y)$ and the measure
$\mu$ is defined as
\[
Tf(x)=\int_{\rd} \krl (x, y)f(y)\, d\mu(y) 
\]
for $x\notin \textrm{supp}(f\mu)$.
In the general situation, one introduces the truncated operators
 $T_\er$, $\er>0$:
\[
T_\er f(x)=\int_{\rd\setminus Q(x, \er)}\krl (x, y)f(y)\, d\mu(y).
 \]
We say that $T$ is bounded on $L^p(\mu)$
provided that the operators $T_\er$ are bounded on $L^p(\mu)$ uniformly in $\er >0$.

\subsection{Regular BMO space}
To define $\rbmo(\mu)$, we need coefficients $\KK(Q, R)$
and doubling cubes.

\subsubsection{Coefficients $\KK(Q, R)$}
For embedded cubes $Q\subset R$ in $\rd$, put
\[
\KK(Q, R) = 1 + \sum_{j=1}^{N_{Q, R}} \frac{\mu(2^j Q)}{\ell^n(2^j Q)},
\]
where $N_{Q, R}$ is the minimal
integer $k$ such that $\ell(2^k Q) \ge \ell(R)$.
Clearly, $\KK(Q, R)\ge 1$.
Also, remark that $\KK(Q, R)$ is estimated by
$C \log(\ell(R)/ \ell(Q))$, since $\mu$ is $n$-dimensional.

\subsubsection{Doubling cubes}
Let $\al>1$, $\beta > \al^n$. A cube $Q$ is called $(\al, \beta)$-doubling if
\[
\mu(\al Q) < \beta \mu(Q).
\]
Let $\mu$ be a Radon measure defined on $\rd$ and $\al>1$.
As indicated in \cite{T}, it is known that, for a sufficiently large parameter
$\beta = \beta (\al, n)$, for $\mu$-almost all $x\in \rd$
there exists a sequence of $(\alpha, \beta)$-doubling cubes $\{Q_k\}_{k=1}^\infty$ centered at $x$ 
and such that 
$\ell(Q_k) \to 0$ as $k\to \infty$.
For definiteness, let $\beta_n$ denote the double infimum of the corresponding constants
 $\beta(10, n)$.

\begin{df}\label{d_doubling}
A cube $Q\subset\rd$ 
is called \textit{doubling} (we write $Q\in\dou$)
if $Q$ is $(10, \beta_n)$-doubling.
\end{df}

\begin{rem}
The analogous definition in 
\cite{DV23PAMS} is based on 
$(4, \beta_n)$-doubling cubes in the place of $(10, \beta_n)$-doubling ones.
Also, remark that Tolsa \cite{T} uses $(2, \beta)$-doubling cubes in his original definition.
However, further work showed that
in Tolsa theory in $\rd$ one can use $(\alpha, \beta)$-doubling cubes with $\alpha >1$
(see, for example, \cite{HMY13, HYY12}).
\end{rem}

\subsubsection{Definition of the space $\rbmo(\mu)$}
 The definition of the space $\rbmo(\mu)$ given below is used as the main one in \cite{DV23PAMS}.
Remark that Tolsa \cite{T} used a different equivalent definition as his principal one.
 
\begin{df}\label{d_rbmo}
The space $\rbmo(\mu)$ consists of $f\in L^1_{loc}(\mu)$ such that
there exists a constant $C_{\mathfrak{E}}>0$
and a collection of constants $\{f_Q\}$ 
(one constant for every doubling cube $Q\subset \rd$)
such that
\begin{align}
 \frac{1}{\mu(Q)}
 \int_Q |f - f_Q|\, d\mu
&\le C_{\mathfrak{E}}, \label{e_df_osc}
 \\
 |f_Q - f_R|
&\le C_{\mathfrak{E}} \KK(Q, R) \label{e_df_K}
\end{align}
for all cubes $Q,R\in\dou$, $Q\subset R$.
Let $\|f\|_{\mathfrak{E}}$ denote the infimum of the 
corresponding constants $C_{\mathfrak{E}}>0$.
\end{df}

It is not difficult to verify that 
$\|f\|_{\mathfrak{E}}$ is a norm on the space $\rbmo(\mu)$
modulo constants.
Next, $\mu$ is a finite $n$-dimensional measure, hence,
$\rbmo(\mu) \subset L^1(\mu)$ by standard arguments.
Direct inspection shows that $\rbmo(\mu)$
is a Banach space with respect to the following norm:
\[
\|f\|_*:=\|f\|_{\mathfrak{E}} + \|f\|_{L^1(\mu)}.
\]

\subsection{Main theorem}\label{ss_mthm}
Recall that $\mu(\rd)< \infty$ by assumption.
For a cube $Q\subset \rd$, put
\[
K(Q)=K(Q,2^k Q),
\]
where $k$ is the minimal positive integer such that $\mu(2^k Q)>\frac{1}{2}\mu(\rd)$.

\begin{theorem}\label{t_main}
Let $\mu$ be a finite positive $n$-dimensional measure defined on $\rd$, $0< n \le m$.
Let $T$ be a Calder\'on--Zygmund operator such that the cancellation property \eqref{e_cz2} holds.
Then the following properties are equivalent:
\begin{itemize}
  \item [(i)] The operator $T$ is bounded on the Banach space $\rbmo(\mu)$.
  \item [(ii)] For every cube $Q\in\dou$, there exists a constant $b_Q$ such that
\begin{align}
\frac{1}{\mu(Q)}
\int_Q |T1 - b_Q|\, d\mu
&\le \frac{C}{\KK(Q)}\quad\textrm{for all cubes}\ Q\in\dou,\label{e_main_osc} \\
 |b_Q - b_R|
&\le C \frac{\KK(Q, R)}{\KK(Q)}\quad\textrm{for all cubes}\ Q,R \in\dou,\ Q\subset R,\label{e_main_K}
\end{align}
where the constant $C>0$ does not depend on $Q$ and $R$.
\end{itemize}
\end{theorem}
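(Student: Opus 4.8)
The plan is to prove the two implications separately. The implication (ii)$\Rightarrow$(i) is a refinement of the sufficiency theorem of \cite{DV23PAMS}, while (i)$\Rightarrow$(ii) is the genuinely new part.

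\emph{From (ii) to (i).} I would follow the construction of \cite{DV23PAMS}. Given $f\in\rbmo(\mu)$ with an admissible family $\{f_Q\}$, and a doubling cube $Q$, split $f=f_Q+(f-f_Q)\chi_{2Q}+(f-f_Q)\chi_{\rd\setminus 2Q}$, apply $T$, and set
\[
(Tf)_Q:=m_Q\big(T((f-f_Q)\chi_{2Q})\big)+T\big((f-f_Q)\chi_{\rd\setminus 2Q}\big)(x_Q)+f_Q\,b_Q ,
\]
where $m_Q$ is the $\mu$-average over $Q$, $x_Q$ the center of $Q$, and $b_Q$ the constant from (ii). One then checks \eqref{e_df_osc} and \eqref{e_df_K} for $Tf$ with $C_{\mathfrak E}\le C\|f\|_*$: the local $L^1$-average of the first term is controlled by $C\|f\|_{\mathfrak E}$ by the boundedness of $T$ on $L^2(\mu)$ and the John--Nirenberg inequality for $\rbmo(\mu)$; the tail term has oscillation over $Q$ at most $C\|f\|_{\mathfrak E}$, and its analogues for $Q\subset R$ differ by at most $C\|f\|_{\mathfrak E}\KK(Q,R)$, both by \eqref{e_cz1}, \eqref{e_cz3} and the definition of $\KK(Q,R)$; and the product $f_Q\,b_Q$ is exactly where (ii) enters, the point being that $|f_Q|\le C\,\KK(Q)\|f\|_*$ and $|f_Q-f_R|\le C\,\KK(Q,R)\|f\|_*$, so that the factors $1/\KK(Q)$ in \eqref{e_main_osc} and $\KK(Q,R)/\KK(Q)$ in \eqref{e_main_K} are precisely what is needed to absorb the size of $f_Q$. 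The passage from the $(4,\beta_n)$-doubling cubes of \cite{DV23PAMS} to the $(10,\beta_n)$-doubling cubes used here only affects the auxiliary covering and comparison lemmas for doubling cubes, which hold with the same proofs; this is the ``slight modification'' of the Introduction.

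\emph{From (i) to (ii).} Constant functions lie in $\rbmo(\mu)$, so $T1$ is well defined and (i) gives $T1\in\rbmo(\mu)$ with $\|T1\|_*\le\|T\|\,\mu(\rd)$; taking $b_Q:=(T1)_Q$ already yields \eqref{e_main_osc} and \eqref{e_main_K} with the right-hand sides $C$ and $C\KK(Q,R)$, i.e.\ with $\KK(Q)$ replaced by $1$. To gain the factor $1/\KK(Q)$, fix a doubling cube $Q$ and let $R_0=R_0(Q)=2^kQ$ be the cube from the definition of $\KK(Q)$. I would test the boundedness of $T$ on $\rbmo(\mu)$ against a function $h_Q$ that equals $1$ on a neighborhood of $Q$ and then decreases, across the scales between $Q$ and $R_0$, down to $0$ outside $R_0$, with $\rbmo$-slope of order $1/\KK(Q)$, so that $h_Q$ is an essentially optimal competitor and $\|h_Q\|_*$ is of order $1/\KK(Q)$ in the relevant range of $Q$. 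Since $h_Q\equiv 1$ near $Q$, for $x\in Q$ the difference $T1(x)-Th_Q(x)=T(1-h_Q)(x)$ is a tail integral, and \eqref{e_cz3} together with the bound $1-h_Q(y)\lesssim \KK(Q,Q_y)/\KK(Q)$ gives oscillation over $Q$ at most $C/\KK(Q)$; since $\|Th_Q\|_{\mathfrak E}\le\|T\|\,\|h_Q\|_*\le C/\KK(Q)$, the oscillation part of the $\rbmo$-estimate for $Th_Q$ yields \eqref{e_main_osc} and its comparison part yields \eqref{e_main_K}. The remaining range of cubes, where this competitor is not of norm $\lesssim 1/\KK(Q)$ (essentially when $\mu(2Q)\,\KK(Q)$ is already bounded below), is disposed of directly from the pointwise bound $T1\in L^\infty(\mu)$, which is a consequence of the cancellation \eqref{e_cz2} alone.

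\emph{Main obstacle.} The hard part is the last step: the $\rbmo$-bound for $T$ is uniform over all cubes, so one must choose the test functions $h_Q$ and estimate their norms sharply enough to extract the genuinely $Q$-dependent gain $1/\KK(Q)$, rather than the trivial $O(1)$ coming from $T1\in\rbmo(\mu)$. This rests on the quantitative geometry of $\rbmo(\mu)$ --- how small $\|h_Q\|_*$ can be while $h_Q\equiv 1$ on $2Q$ and $h_Q$ decays to $0$ at the ambient scale, which is governed precisely by the coefficient $\KK(Q)$ --- matched with the tail estimate from \eqref{e_cz3}. Once this balance is struck, the constant $C$ in (ii) depends only on $m$, $n$, $\de$, the constants in \eqref{e_ndim} and \eqref{e_cz1}--\eqref{e_cz2}, and $\|T\|$.
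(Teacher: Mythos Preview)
Your treatment of (ii)$\Rightarrow$(i) matches the paper: both defer to \cite{DV23PAMS}, noting only that the doubling parameter changes from $4$ to $10$.

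For (i)$\Rightarrow$(ii) your route is genuinely different. The paper does \emph{not} use a test function of small norm; instead it builds (Lemmas~\ref{l_test}--\ref{c_test}) the function
\[
\varphi_x(y)=1+\int_{|y-x|<|t|}\frac{d\mu(t)}{|t|^n},
\]
shows $\|\varphi_x\|_*\le 1$ uniformly in $x$, and---this is the key point---proves that its \emph{constant part} in the standard decomposition satisfies $|\varphi_{2Q}|\gtrsim K(Q)$. Writing $\varphi=\varphi_{2Q}+\varphi_2+\varphi_3$ and applying $T$, the paper gets $\varphi_{2Q}\,T1 = T\varphi - T\varphi_2 - T\varphi_3$; the right side has oscillation and $K$-differences bounded by a fixed constant (from $\|T\varphi\|_{\mathfrak E}\le C$ and Lemmas~\ref{l_23}, \ref{l_23_K} taken verbatim from \cite{DV23PAMS}), and \emph{dividing} by $\varphi_{2Q}\approx K(Q)$ produces the factor $1/K(Q)$. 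Your proposal is the rescaled dual: take $h_Q\approx(\varphi_{x_Q}-1)/K(Q)$ truncated at $1$, so that $\|h_Q\|_*\lesssim 1/K(Q)$ and $h_Q\equiv 1$ near $Q$, then split $T1=Th_Q+T(1-h_Q)$. Both work, and the underlying integral estimates coincide after scaling. The paper's version is a bit cleaner: $\varphi_x$ depends only on the center $x$ (not on $Q$), and the three-term decomposition plugs directly into the lemmas already proved in \cite{DV23PAMS}, whereas your tail bound for $T(1-h_Q)$ has to be redone with the extra factor $|1-h_Q(y)|\lesssim K(Q,Q_y)/K(Q)$.

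One remark on your hedging: the ``exceptional range'' where $\mu(2Q)\,K(Q)$ is bounded below is empty. Since $\mu$ is $n$-dimensional and finite, $\mu(2Q)\,K(Q)\le \mu(2Q)+\sum_j \mu(2Q)\,\mu(2^jQ)\ell^{-n}(2^jQ)\le C+C\sum_j 2^{-jn}\mu(\mathbb R^m)\le C$ for \emph{every} $Q$; this is exactly the computation behind \eqref{e_L1} in the paper. So your $h_Q$ has $\|h_Q\|_{L^1}\lesssim 1/K(Q)$ for all $Q$, and no separate appeal to $T1\in L^\infty$ is needed.
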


\begin{rem}
By definition, $T$ is bounded on   
$RBMO(\mu)$
provided that the truncated operators $T_\er$, $\er > 0$, 
are uniformly bounded on the space $RBMO(\mu)$. 
Analogously, by definition, 
inequalities \eqref{e_main_osc} and \eqref{e_main_K} 
mean that these estimates hold for the operators $T_\er$ 
uniformly in $\er > 0$.
\end{rem}
\begin{rem}
Firstly, under hypothesis of Theorem~\ref{t_main}, one implicitly assumes that $T1\in L^\infty(\mu)$.
Indeed, this property follows from \eqref{e_cz2} by \cite[Lemma~2.6]{DV23PAMS}.
Secondly, by Lemma~2.8 from \cite{DV23PAMS} condition \eqref{e_cz2}
implies that $T$ is a bounded operator on $L^2(\mu)$.
\end{rem}

\subsection{Organization of the paper}
Certain auxiliary results are collected in Section~\ref{s_aux}.
In Section~\ref{s_test}, we construct test functions. These functions
are used in the final Section~\ref{s_main} to prove the implication
(i)$\Rightarrow$(ii) in the main theorem.

\subsection{Notation} 
The symbol $C$ is used to denote a positive constant whose value may vary from line to line.
If $A, B>0$ and $A\le C B$ for a constant $C>0$,
then we write $A\lesssim B$.
Notation $A\approx B$ means that $A\lesssim B$ and $B\lesssim A$.

\section{Auxiliary results}\label{s_aux}
\subsection{An equivalent norm on $\rbmo(\mu)$}
\begin{df}\label{d_rbmo_A}
Let $f\in L^1_{loc}(\mu)$. Fix a constant $\rho>1$.
Let $\|f\|_{\mathfrak{A}, \rho}$ denote the infimum of the constants
$C_{\mathfrak{A}}= C_{\mathfrak{A}, \rho}>0$
with the following properties:
for every cube $Q$, there exists a constant $f_Q \in \Rbb$ such that
\begin{align}
\sup_Q
\frac{1}{\mu(\rho Q)}
 \int_Q |f(x) - f_Q|\, d\mu (x)
&\le C_{\mathfrak{A}}, \label{e_A_osc}
\\
 |f_Q - f_R|
&\le C_{\mathfrak{A}} K(Q,R)\quad \text{for all cubes\ } Q \subset R. \label{e_A_K}
\end{align}
Direct inspection shows that  
$\|\cdot\|_{\mathfrak{A}}$ is a seminorm on $\rbmo(\mu)$.
\end{df}

\begin{rem}
The original seminorm introduced by Tolsa \cite{T} on $\rbmo(\mu)$ 
differs from the seminorms given in Definitions~\ref{d_rbmo} and \ref{d_rbmo_A}. 
However, it is known that all these seminorms are equivalent (see \cite{T} and \cite[Section~2]{DV23PAMS}).
\end{rem}

\subsection{Decomposition of a function from $\rbmo(\mu)$ into three terms}\label{ss_cstr}
Let $T$ denote a Carder\'on--Zygmund operator bounded on $L^2(\mu)$.
Assume that $f\in \rbmo(\mu)$ and $\rho=2$.
Applying Definition~\ref{d_rbmo_A}, for every cube $2Q\subset\rd$,
we select a constant $f_{2Q}$ 
such that properties \eqref{e_A_osc}
and \eqref{e_A_K}
hold for the constant $C_{\mathfrak{A}, \rho} = 2\|f\|_{\mathfrak{A}, \rho}$
and for $2Q$ in the place of $Q$, that is,
\begin{align}
\sup_{2Q}
\frac{1}{\mu(2\rho Q)}
 \int_{2Q} 
&|f(x) - f_{2Q}|\, d\mu (x)
 \le 2\|f\|_{\mathfrak{A}, \rho}, \label{e_A_osc_2Q}
\\
 |f_{2Q} - f_{2R}|
&\le 2\|f\|_{\mathfrak{A}, \rho} K(2Q,2R)\quad \text{for all cubes\ } Q \subset R. \label{e_A_K_2Q}
\end{align}

Introduce the following functions:
\[
\begin{aligned}
 f_1 = f_{1,Q}
&= f_{2Q}, \\
 f_2 = f_{2,Q}
&= (f- f_{2Q}) \chi_{2Q}, \\
 f_3 = f_{3,Q}
&= (f- f_{2Q}) \chi_{\rd\setminus 2Q}.
\end{aligned}
\]

Observe that
$
f= f_1 + f_2 + f_3.
$
Similar decompositions are well known (see, for example, \cite{H}).

Put $b_{2, Q}=0$ and
\[
b_{3, Q} = \frac{1}{\mu(Q)} \int_{Q} Tf_{3,Q} (y)\, d\mu(y).
\]

We will need two lemmas from \cite{DV23PAMS} about properties of the above functions and constants.

\begin{lemma}\label{l_23}
There exists a constant $C>0$ such that
\[
\frac{1}{\mu(Q)} \int_Q |T f_k  - b_{k, Q}|\,d\mu \le C \|f\|, \quad k= 2,3,
\]
for any cube $Q\in\dou$.
\end{lemma}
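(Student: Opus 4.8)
The plan is to reduce each estimate to an oscillation of $Tf_k$ over $Q$ and then to treat separately the part of $f$ supported near $Q$ (via the $L^2(\mu)$‑boundedness of $T$, which holds by \cite[Lemma~2.8]{DV23PAMS} because \eqref{e_cz2} is assumed) and the part supported far from $Q$ (via the regularity \eqref{e_cz3} of the kernel). Two inputs are used repeatedly: since $Q\in\dou$, Definition~\ref{d_doubling} gives $\mu(cQ)\le\beta_n\mu(Q)$ for every fixed $c\le10$; and the John--Nirenberg inequality for $\rbmo(\mu)$ (see \cite{T} and \cite[Section~2]{DV23PAMS}) upgrades the $L^1$ oscillation bound \eqref{e_A_osc_2Q} to $\int_{2P}|f-f_{2P}|^2\,d\mu\lesssim\|f\|^2\mu(4P)$ for every cube $P$. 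The arguments apply verbatim to the truncations $T_\er$ uniformly in $\er$, as required.

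\emph{Case $k=2$.} Here $b_{2,Q}=0$ and $f_2=(f-f_{2Q})\chi_{2Q}$ is supported on $2Q$, so by Cauchy--Schwarz and the $L^2(\mu)$‑boundedness of $T$,
\[
\frac1{\mu(Q)}\int_Q|Tf_2|\,d\mu\le\mu(Q)^{-1/2}\|Tf_2\|_{L^2(\mu)}\lesssim\mu(Q)^{-1/2}\Big(\int_{2Q}|f-f_{2Q}|^2\,d\mu\Big)^{1/2}.
\]
By John--Nirenberg the last integral is $\lesssim\|f\|^2\mu(4Q)$, and $\mu(4Q)\le\beta_n\mu(Q)$; this settles the case $k=2$.

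\emph{Case $k=3$.} Since $b_{3,Q}=\frac1{\mu(Q)}\int_Q Tf_{3}\,d\mu$,
\[
\frac1{\mu(Q)}\int_Q|Tf_3-b_{3,Q}|\,d\mu\le\frac1{\mu(Q)^2}\iint_{Q\times Q}|Tf_3(x)-Tf_3(x')|\,d\mu(x)\,d\mu(x').
\]
I would split $f_3=g+h$ with $g=(f-f_{2Q})\chi_{c_0Q\setminus 2Q}$ and $h=(f-f_{2Q})\chi_{\rd\setminus c_0Q}$, where $c_0$ is a fixed absolute constant for which $z\notin c_0Q$ and $x,x'\in Q$ imply $2\dist(x,x')\le\dist(x,z)$ while $\mu(2c_0Q)\le\beta_n\mu(Q)$ for $Q\in\dou$ — the compatibility of these two requirements is exactly what the radius $10$ in Definition~\ref{d_doubling} is tailored to. Then $|Tf_3(x)-Tf_3(x')|\le|Tg(x)|+|Tg(x')|+|Th(x)-Th(x')|$, so it remains to bound $\tfrac1{\mu(Q)}\int_Q|Tg|\,d\mu$ and $\sup_{x,x'\in Q}|Th(x)-Th(x')|$ by $C\|f\|$. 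The first is handled exactly as in the case $k=2$ (Cauchy--Schwarz, $L^2$‑boundedness, John--Nirenberg), after replacing $f_{2Q}$ by the constant attached to $c_0Q$ at the cost of $|f_{c_0Q}-f_{2Q}|\lesssim\|f\|\,\KK(2Q,c_0Q)\lesssim\|f\|$ (a bounded sum, by \eqref{e_ndim}) via \eqref{e_A_K_2Q}. For the second, write $\rd\setminus c_0Q=\bigcup_{j\ge j_0}(2^{j+1}Q\setminus 2^jQ)$; by \eqref{e_cz3}, $|\krl(x,z)-\krl(x',z)|\lesssim\ell(Q)^\delta(2^j\ell(Q))^{-n-\delta}$ on the $j$‑th annulus, so
\[
|Th(x)-Th(x')|\lesssim\sum_{j\ge j_0}\frac{\ell(Q)^\delta}{(2^j\ell(Q))^{n+\delta}}\int_{2^{j+1}Q}|f-f_{2Q}|\,d\mu.
\]
Inserting the constant $f_{2^{j+1}Q}$ from Section~\ref{ss_cstr}, one has $\int_{2^{j+1}Q}|f-f_{2^{j+1}Q}|\,d\mu\lesssim\|f\|\mu(2^{j+2}Q)$ by \eqref{e_A_osc_2Q}, $|f_{2^{j+1}Q}-f_{2Q}|\lesssim\|f\|\,\KK(2Q,2^{j+1}Q)\lesssim\|f\|(j+1)$ by \eqref{e_A_K_2Q} and \eqref{e_ndim}, and $\mu(2^{j+1}Q),\mu(2^{j+2}Q)\lesssim(2^j\ell(Q))^n$ by \eqref{e_ndim}; hence $\int_{2^{j+1}Q}|f-f_{2Q}|\,d\mu\lesssim\|f\|(j+1)(2^j\ell(Q))^n$ and
\[
|Th(x)-Th(x')|\lesssim\|f\|\sum_{j\ge j_0}(j+1)2^{-j\delta}\lesssim\|f\|,
\]
which completes the case $k=3$.

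The case $k=2$ is routine. The main obstacle is the ``far'' part of $k=3$: one must follow the $\rbmo$‑constants $f_{2^jQ}$ through every dyadic scale and verify that the (at most logarithmic) growth of $\KK(2Q,2^jQ)$ — an $n$‑dimensionality input — is beaten by the geometric gain $2^{-j\delta}$ coming from \eqref{e_cz3}. A secondary, technical point is to choose the splitting radius $c_0$ compatibly with the dilation $10$ fixed in Definition~\ref{d_doubling}, which is precisely what makes $\mu(c_0Q)\lesssim\mu(Q)$ available.
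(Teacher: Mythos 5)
Your proof is correct. A point of context: the paper does not prove this lemma at all — it is quoted verbatim from \cite{DV23PAMS} ("We will need two lemmas from \cite{DV23PAMS}\dots"), so there is no in-paper proof to compare against. Your argument is the expected one for the cited lemma: the near/far splitting, $L^2(\mu)$-boundedness of $T$ plus John--Nirenberg for the local part, kernel regularity \eqref{e_cz3} plus the $\KK$-condition for the tail. The bookkeeping in the far sum — $\int_{2^{j+1}Q}|f-f_{2Q}|\,d\mu\lesssim\|f\|(j+1)(2^j\ell(Q))^n$ against the geometric gain $2^{-j\delta}$ — is exactly right, and the choice $c_0=5$ is the correct way to reconcile the separation needed for \eqref{e_cz3} with the $(10,\beta_n)$-doubling dilate.

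One remark on economy rather than correctness: for the intermediate annular piece $g=(f-f_{2Q})\chi_{c_0Q\setminus 2Q}$, you do not actually need $L^2$-boundedness of $T$ or John--Nirenberg. Since $\dist(x,z)\gtrsim\ell(Q)$ for $x\in Q$, $z\notin 2Q$, the size bound \eqref{e_cz1} gives pointwise
\[
|Tg(x)|\lesssim \ell(Q)^{-n}\int_{c_0Q}|f-f_{2Q}|\,d\mu\lesssim \ell(Q)^{-n}\Bigl(\|f\|\mu(2c_0Q)+|f_{c_0Q}-f_{2Q}|\mu(c_0Q)\Bigr)\lesssim\|f\|,
\]
using only \eqref{e_A_osc_2Q}, \eqref{e_A_K_2Q} and $n$-dimensionality \eqref{e_ndim}; no doubling of $Q$ enters at this point. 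This is the more standard route and also explains why the lemma can be (and in \cite{DV23PAMS} is) proved with the weaker $(4,\beta_n)$-doubling: the doubling property is genuinely needed only for $k=2$, where $2Q$ must be compared to $Q$. Your version, relying on $\mu(2c_0Q)\le\beta_n\mu(Q)$, is tied to the $(10,\beta_n)$-normalization, so the aside that ``the radius $10$ is tailored to'' this lemma overstates the situation — the $10$ is chosen in this paper for Lemma~\ref{c_test} (to make $2Q$ a $(5,\beta_n)$-doubling cube), not for Lemma~\ref{l_23}.
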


\begin{lemma}\label{l_23_K}
There exists a constant $C>0$ such that
\[
|b_{k, Q} - b_{k, R}| \le C \|f\| \KK(Q, R), \quad k= 2, 3.
\]
for any cubes $Q\subset R$.
\end{lemma}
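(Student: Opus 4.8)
The case $k=2$ is immediate, since $b_{2,Q}=b_{2,R}=0$ for every cube.

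For $k=3$ I would argue along the lines of Tolsa's proof that a Calder\'on--Zygmund operator bounded on $L^{2}(\mu)$ carries functions of bounded oscillation into data satisfying inequalities of the type \eqref{e_df_osc}--\eqref{e_df_K}. It is convenient to first reduce to $Q,R\in\dou$: given arbitrary cubes $Q\subset R$, replace them by their smallest doubling ancestors $\widehat Q,\widehat R$ among $\{10^{j}Q\}_{j\ge0}$ and $\{10^{j}R\}_{j\ge0}$. Along the stretch from $Q$ to $\widehat Q$ the intermediate dilates are non-doubling, so the measures of the dilates grow there at least geometrically; combined with \eqref{e_ndim} this forces $\KK(Q,\widehat Q)\lesssim1$, and then (a special case of) the estimates below, together with the $L^{2}(\mu)$-boundedness of $T$, gives $|b_{3,Q}-b_{3,\widehat Q}|\lesssim\|f\|$. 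Likewise for $R$. So assume $Q,R\in\dou$, $Q\subset R$, and---passing through an auxiliary cube of side $\approx\ell(R)$ if necessary---also $2Q\subset 2R$.

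Using $f=f_{2Q}+f_{2,Q}+f_{3,Q}$ and the analogous identity for $R$, one checks that $f_{3,Q}-f_{3,R}$ equals $f-f_{2Q}$ on $2R\setminus2Q$, equals the constant $f_{2R}-f_{2Q}$ on $\rd\setminus2R$, and vanishes on $2Q$. Averaging $T(f_{3,Q}-f_{3,R})$ over $Q$ and adding and subtracting $\tfrac1{\mu(R)}\int_{R}Tf_{3,R}\,d\mu=b_{3,R}$, I would split
\begin{align*}
b_{3,Q}-b_{3,R}&=\frac1{\mu(Q)}\int_{Q}\int_{2R\setminus2Q}\krl(x,y)\bigl(f(y)-f_{2Q}\bigr)\,d\mu(y)\,d\mu(x)\\
&\quad+\Bigl(\tfrac1{\mu(Q)}\int_{Q}Tf_{3,R}\,d\mu-\tfrac1{\mu(R)}\int_{R}Tf_{3,R}\,d\mu\Bigr)+\mathcal{T},
\end{align*}
where $\mathcal{T}$ collects the contribution of the constant $f_{2R}-f_{2Q}$ on $\rd\setminus2R$. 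The middle term is an oscillation of $Tf_{3,R}$ on $R$; since $f_{3,R}$ is supported off $2R$, it is estimated by decomposing $\rd\setminus2R$ into dyadic shells and applying the smoothness \eqref{e_cz3} together with $\ell(2^{k+1}R)^{-n}\int_{2^{k+1}R}|f-f_{2R}|\,d\mu\lesssim k\|f\|$ and \eqref{e_ndim}, which yields $\sum_{k\ge1}k\,2^{-k\delta}\|f\|\lesssim\|f\|$. The first term is carried entirely by the scales between $\ell(Q)$ and $\ell(R)$; for it I would decompose $2R\setminus2Q$ into the shells $R_{j}=2^{j+1}Q\setminus2^{j}Q$ and on each $R_{j}$ write $f-f_{2Q}=(f-f_{2^{j+1}Q})+\sum_{i=1}^{j}(f_{2^{i+1}Q}-f_{2^{i}Q})$. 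The ``centred'' parts $f-f_{2^{j+1}Q}$ are handled by \eqref{e_cz1} and $\int_{R_{j}}|f-f_{2^{j+1}Q}|\,d\mu\lesssim\|f\|\,\ell(2^{j+1}Q)^{n}$, and sum to $\lesssim\|f\|\sum_{j}\mu(2^{j+1}Q)/\ell(2^{j+1}Q)^{n}\lesssim\|f\|\,\KK(Q,R)$; the piecewise-constant ``drift'' parts are combined with the shell kernel integrals $\int_{R_{j}}\krl(x,y)\,d\mu(y)$ (which are $\lesssim\mu(2^{j+1}Q)/\ell(2^{j+1}Q)^{n}$ by \eqref{e_cz1}) and the increments $|f_{2^{i+1}Q}-f_{2^{i}Q}|\lesssim\|f\|\,\KK(2^{i}Q,2^{i+1}Q)$, while $\mathcal{T}$ is matched against the corresponding tail of $b_{3,R}$ so that it effectively sees only scales below $\ell(R)$. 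Organising all these sums so that the dependence on $\KK(Q,R)$ stays linear would give the remaining bound $\lesssim\|f\|\,\KK(Q,R)$, hence $|b_{3,Q}-b_{3,R}|\lesssim\|f\|\,\KK(Q,R)$.

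I expect this last organisation to be the main obstacle. Handled naively, the ``drift'' part of the first term contributes---in absolute value---a quantity of size $\|f\|\,\KK(Q,R)^{2}$, or, after a summation by parts, of size $\|f\|$ times the number of intermediate scales, which need not be comparable to $\KK(Q,R)$ when $\mu$ is thin between scales; and the term $\mathcal{T}$ involves a tail of $T$ that is convergent (because $\mu$ is finite) but not uniformly bounded. Extracting the sharp \emph{linear} dependence on $\KK(Q,R)$ from these competing pieces---by carefully matching the drift at comparable scales and cancelling the large tail contributions against one another---is the technical heart of the argument, and this, together with the otherwise routine reduction to doubling cubes, is carried out in \cite{DV23PAMS}.
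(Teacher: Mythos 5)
The paper does not prove this lemma at all: it is quoted from \cite{DV23PAMS} (``We will need two lemmas from \cite{DV23PAMS}\dots''), so there is no internal argument to compare your attempt with. Judged as a proof, your proposal is incomplete, and you say so yourself. The case $k=2$ is indeed trivial, and your opening moves for $k=3$ are the natural ones: the identity $f_{3,Q}-f_{3,R}=(f-f_{2Q})\chi_{2R\setminus 2Q}+(f_{2R}-f_{2Q})\chi_{\rd\setminus 2R}$, the oscillation of $Tf_{3,R}$ over $Q$ versus $R$ controlled by \eqref{e_cz3} and dyadic shells, and the intermediate scales treated by shells between $\ell(Q)$ and $\ell(R)$. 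But the two estimates that \emph{are} the content of the lemma are left unproved. First, the ``drift'' contribution: using $|f_{2^{j+1}Q}-f_{2Q}|\lesssim\|f\|\,\KK(2Q,2^{j+1}Q)$ gives $\lesssim\|f\|\,\KK(Q,R)^2$, while using $|f_{2^{i+1}Q}-f_{2^{i}Q}|\lesssim\|f\|$ gives $\lesssim\|f\|\sum_j j\,\mu(2^{j+1}Q)/\ell(2^{j}Q)^n$, which need not be $\lesssim\|f\|\,\KK(Q,R)$ when $\mu$ is irregular across scales; you name this obstacle but offer no mechanism that produces the linear bound. Second, the term $\mathcal T=\langle T\bigl((f_{2R}-f_{2Q})\chi_{\rd\setminus 2R}\bigr)\rangle_Q$: here $|f_{2R}-f_{2Q}|\lesssim\|f\|\,\KK(Q,R)$, but $\int_{\rd\setminus 2R}|x-y|^{-n}\,d\mu(y)$ is finite yet not uniformly bounded (it is of the order of the quantity $\KK(R)$ from Subsection~\ref{ss_mthm}), so this piece is not $\lesssim\|f\|\,\KK(Q,R)$ unless a genuine cancellation occurs; the assertion that it ``is matched against the corresponding tail of $b_{3,R}$'' is not backed by any computation---$b_{3,R}$ involves $T\bigl((f-f_{2R})\chi_{\rd\setminus 2R}\bigr)$, not a constant multiple of $T\chi_{\rd\setminus 2R}$ with the same coefficient, so the cancellation would have to be exhibited, not asserted.

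Since you ultimately delegate exactly these two points to \cite{DV23PAMS}, your text documents where the difficulty lies rather than resolving it; the preliminary reduction to doubling cubes is also superfluous, because the decomposition and the constants $f_{2Q}$ are defined for arbitrary cubes via Definition~\ref{d_rbmo_A}. To be fair, citing \cite{DV23PAMS} is precisely what the present paper does for both Lemma~\ref{l_23} and Lemma~\ref{l_23_K}; but then a one-line citation is the honest form of the argument, whereas presented as a proof sketch your write-up has a genuine gap at its technical heart.
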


\section{Test functions}\label{s_test}

In this section, we construct test functions used to prove the implication (i)$\Rightarrow$(ii).
We introduce the following notation:
\[
|x|=\max \{|x_j|: j=1,2,\dots, m\}, \quad x\in\rd.
\]
Put
\begin{equation}\label{e_ph_df}
  \varphi(y)=\varphi(|y|)=1+\int_{|y|<|t|} \frac{d\mu(t)}{|t|^n}, \quad y\in\rd \setminus \{0\}.
\end{equation}
The measure $\mu$ is $n$-dimensional and $\mu(\rd)<\infty$, thus, the above integral converges.
The function $\varphi$ is continuous and $\ell^\infty$-radial on its domain.

Let $Q(r)$, $r>0$, denote the
cube centered at the origin and with side length $2r$.

\begin{rem}\label{r_r4}
Using the dyadic decomposition for the integral defining $\ph(y)$, we conclude that
\[
 \varphi(y)\approx K(Q(|y|))
\]
for $y\in \mathbb{R}^d\setminus \{0\}$. See \cite{T}.
\end{rem}

The following result is motivated by Lemma~1.1
from \cite{Sj}, where 
$\mu$ is Lebesgue measure.

\begin{lemma}\label{l_test}
Let $\ph$ be defined by \eqref{e_ph_df}, $\alpha\ge 5$ and $\beta> \alpha^n$.
Then $\varphi\in \rbmo(\mu)$. 
There exist constants $C$ and $C(\alpha, \beta)>0$ such that
\begin{equation}\label{e_ph_Q}
|\langle\ph_\qe \rangle|\ge C K(\qe) - C(\alpha, \beta)
\end{equation}
for every $(\alpha, \beta)$-doubling cube $\qe$ centered at the origin.
\end{lemma}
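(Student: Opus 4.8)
The plan is to verify the two defining properties of $\rbmo(\mu)$ for $\ph$ and then establish the lower bound \eqref{e_ph_Q} by a direct computation of the mean value $\langle \ph_\qe\rangle$ over a doubling cube centered at the origin.

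\medskip

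First I would show $\ph\in\rbmo(\mu)$ using Definition~\ref{d_rbmo}. The key structural fact, recorded in Remark~\ref{r_r4}, is that $\ph(y)\approx K(Q(|y|))$, so $\ph$ is comparable to a radial function that grows only logarithmically. For each doubling cube $Q$ I would set $\ph_Q$ to be (a multiple of) $K(Q(d_Q))$ where $d_Q$ is the $\ell^\infty$-distance from the origin to the center of $Q$, distinguishing the case where $Q$ is close to the origin (so that $Q$ meets a fixed multiple of $Q(\ell(Q))$ and the oscillation is controlled by the total mass of $d\mu(t)/|t|^n$ over that region, which is $O(1)$ by $n$-dimensionality) from the case where $Q$ is far from the origin (so that $\ph$ varies slowly across $Q$, the gradient-type estimate $|\ph(y_1)-\ph(y_2)|\lesssim \mu(\text{annulus})/d_Q^n\lesssim 1$ giving \eqref{e_df_osc}). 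Property \eqref{e_df_K} for $Q\subset R$ follows because $|K(Q(d_Q))-K(Q(d_R))|$ telescopes into a sum of terms $\mu(2^jQ_0)/\ell^n(2^jQ_0)$ over the dyadic scales separating $Q$ from $R$, which is exactly the kind of sum bounded by $\KK(Q,R)$; here one uses that $\KK$ is essentially monotone and that the number of relevant scales is controlled. This part is bookkeeping and should be routine given \cite{T}.

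\medskip

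The heart of the lemma is \eqref{e_ph_Q}. Let $\qe$ be an $(\alpha,\beta)$-doubling cube centered at the origin, with side length $2r$, so $\qe=Q(r)$. On $\qe$ we have $\ph(y)=1+\int_{|y|<|t|}|t|^{-n}\,d\mu(t)$, and I would split the integral as $\int_{|y|<|t|\le \alpha r} + \int_{|t|>\alpha r}$. The second piece is $\ge c\,K(Q(\alpha r)) - C$ and by the doubling hypothesis $\mu(\alpha\qe)<\beta\mu(\qe)$ one can compare $K(Q(\alpha r))$ with $K(\qe)=K(Q(r))$ up to an additive constant depending on $\alpha,\beta$ (the single extra dyadic block $\mu(2^j\qe)/\ell^n(2^j\qe)$ that might be lost is $\le \mu(\alpha\qe)/\ell^n(\qe)\le \beta\mu(\qe)/\ell^n(\qe)\lesssim\beta$). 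Averaging over $y\in\qe$ with respect to $d\mu$ and using that the inner integral is a nonnegative, radially nonincreasing function of $|y|$ bounded below on all of $\qe$ by its value at $|y|=r$, we get $\langle\ph_\qe\rangle \ge c\,K(\qe) - C(\alpha,\beta)$. Since $\ph\ge 1>0$ everywhere, $\langle\ph_\qe\rangle$ is positive and equals $|\langle\ph_\qe\rangle|$, which gives \eqref{e_ph_Q}.

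\medskip

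The main obstacle I anticipate is the careful handling of the constants in the comparison between $K(Q(\alpha r))$, $K(Q(r))$, and the averaged value of $\ph$ over $\qe$: one must make sure that passing from "integral over $|t|>\alpha r$" to "$K(\qe)$" only costs an additive constant of the form $C(\alpha,\beta)$ rather than a multiplicative loss, and this is precisely where the $(\alpha,\beta)$-doubling hypothesis with $\alpha\ge 5$ is used (the requirement $\alpha\ge 5$ presumably guarantees enough room so that the cube $Q(\alpha r)$ genuinely captures the dyadic scale next to $\qe$ while still letting the doubling estimate close the gap). A secondary, more technical point is the verification of \eqref{e_df_K} for $\ph$ when neither $Q$ nor $R$ is centered at the origin: one needs the standard fact from Tolsa's theory that $\KK(Q,R)$ is, up to constants, insensitive to replacing $Q$ by a comparable cube and is superadditive along chains, so that $\KK(Q_0(d_Q),Q_0(d_R))\lesssim\KK(Q,R)$; I would cite \cite{T} for this rather than reprove it.
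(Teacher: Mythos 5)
Your overall plan matches the paper's (verify the two RBMO conditions for a suitable choice of constants $\ph_Q$, then extract the lower bound from the average over a doubling cube centered at the origin), but there is a genuine gap in your oscillation estimate for cubes near the origin, and your derivation of \eqref{e_ph_Q} takes a somewhat different route from the paper.

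The gap: for the case where $Q$ is close to the origin you assert that ``the oscillation is controlled by the total mass of $d\mu(t)/|t|^n$ over a fixed multiple of $Q(\ell(Q))$, which is $O(1)$ by $n$-dimensionality.'' This is false: for a general $n$-dimensional measure (e.g.\ Lebesgue measure with $n=m$), $\int_{|t|<R}|t|^{-n}\,d\mu(t)$ diverges, because the annuli $\{2^{-j-1}R<|t|\le 2^{-j}R\}$ each contribute a bounded amount and there are infinitely many of them. In fact the pointwise oscillation of $\ph$ over such a cube can be infinite. The paper avoids this by not estimating the inner integral $\int_{|z|<|t|<|y_Q|}|t|^{-n}\,d\mu(t)$ pointwise in $z$; instead it integrates in $z$ first and applies Fubini, getting $\int_Q|\ph-\ph_Q|\,d\mu \le \int_{|t|<|y_Q|}\mu(Q(|t|))\,|t|^{-n}\,d\mu(t)\lesssim\mu(5Q)$, which is the correct normalization for \eqref{e_A_osc} with $\rho=5$ (and, a fortiori, for Definition~\ref{d_rbmo} on doubling cubes). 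So your near-origin case needs this Fubini step; the ``$O(1)$'' claim cannot stand as written.

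Concerning \eqref{e_ph_Q}, the paper proceeds differently: it applies the already-proved oscillation bound to the cube $\qe$, uses the doubling hypothesis to absorb $\mu(5\qe)/\mu(\qe)\le\beta$, and so passes from $\ph_\qe=\ph(\ell_\qe/2)\approx K(\qe)$ to the average $|\langle\ph\rangle_\qe|$ with an additive loss $C\beta$. Your argument instead uses that $\ph(y)$ is radially nonincreasing, so $\langle\ph\rangle_\qe\ge\ph(r)$ with $r=\ell_\qe/2$, and then $\ph(r)\approx K(\qe)$ by Remark~\ref{r_r4}. That route is actually cleaner and, if you use the monotonicity directly rather than splitting at $\alpha r$ and comparing $K(Q(\alpha r))$ to $K(\qe)$, it does not need the doubling hypothesis at all (doubling enters the paper's proof only because it routes the estimate through the oscillation bound). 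Your intermediate splitting at $\alpha r$ is an unnecessary detour, but it is not wrong.

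Finally, the verification of the $K$-condition \eqref{e_df_K} (or \eqref{e_A_K}) in your sketch is too compressed to evaluate; the paper's Step~II is noticeably more delicate than ``telescope and cite Tolsa,'' with several case distinctions depending on whether $2|y_Q|\ge|y_{Q_0}|$ and whether $2|y_R|\ge|y_{Q_0}|$. You would need to spell this out, as the smallest origin-centered cube containing $Q$ is not simply a comparable translate of $Q$ when $Q$ straddles the origin.
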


\begin{rem}
We apply Lemma~\ref{l_test} with a sufficiently large $K(\qe)$, therefore,
 $C K(\qe) - C(\alpha, \beta)$
in estimate~\eqref{e_ph_Q}
becomes positive in this case.
\end{rem}

\begin{proof}[Proof of Lemma~\ref{l_test}]
The argument is based on the following plan: on the first two steps, for an arbitrary cube $Q$, 
we verify conditions \eqref{e_A_osc} and \eqref{e_A_K} from Definition~\ref{d_rbmo_A} with $\rho=5$.
The final third step is devoted to proving estimate \eqref{e_ph_Q} 
for an $(\alpha, \beta)$-doubling cube $\qe$ centered at the origin.

\smallskip
\textbf{Step I.}
On this step, we verify the oscillation condition \eqref{e_A_osc} from Definition~\ref{d_rbmo_A} with $\rho=5$. 
To do this, fix a cube $Q$ with center $x_Q$ and with side length $\ell_Q$.
Let $\widetilde {Q}$ denote the smallest cube with center at the origin and containing the cube $Q$. 
Choose $y_Q\in \partial Q$ such that $|y_Q|=\ell_Q/2+|x_Q|$. Put $\ph_Q=\varphi(y_Q)$. We have
\[
\begin{split}
I
&=\int_Q|\varphi(z)-\varphi(y_Q)|d\mu(z) =\int_Q\left|\int_{|t|>|z|} \frac{d\mu(t)}{|t|^n} -\int_{|t|>|y_Q|} \frac{d\mu(t)}{|t|^n}\right|d\mu(z) \\
&=\int_Q d\mu(z) \int_{|z|<|t|<|y_Q|} \frac{d\mu(t)}{|t|^n} 
\\
&\le \int_{\widetilde{Q}} d\mu(z) \int_{|z|<|t|<|y_Q|} \frac{d\mu(t)}{|t|^n} .
\end{split}
\]

Now, consider two cases depending on the fulfillment of the inequality 
\[
\ell_Q> |x_Q|.
\]

If $\ell_Q> |x_Q|$, then
the inequalities $|y_Q|=|x_Q|+\ell_Q/2< 3 \ell_Q/2$ and $|y_Q|+|x_Q|<5\ell_Q/2$ 
imply that $\widetilde{Q}=Q(|y_Q|)$.
Therefore, 
$Q\subset\widetilde{Q}\subset 5Q$. Hence,
 \[
 I\leq \int_{\widetilde {Q}} d\mu(z) \int_{|z|<|t|<|y_Q|} \frac{d\mu(t)}{|t|^n} 
 = \int_{|z|<|y_Q|} d\mu(z) \int_{|z|<|t|<|y_Q|} \frac{d\mu(t)}{|t|^n}. 
 \]
Changing the order of integration, we continue as
\[
I\le \int_ {|t|<|y_Q|}\frac{d\mu(t)}{|t|^n} \int_{|z|<|t|}  d\mu(z)
\le\int_ {|t|<|y_Q|}\frac{\mu(Q(|t|))}{|t|^n}d\mu(t).  
\]
Since $\mu$ is $n$-dimensional, we have 
 $\mu(Q(|t|))\leq C |t|^n$, therefore,
\[I\lesssim\int_ {|t|>|y_Q|}d\mu(t) =\mu(\widetilde{Q})\leq \mu(5Q),
\]
as required. The analysis of the first case is completed.

If $\ell_Q\leq |x_Q|$, then $|z|\geq |x_Q|/2$ for $z\in Q$. Also, we have $|y_Q|\leq 3|x_Q|/2$.
Hence, for the inner integral, we obtain
\[
\begin{split}
\int_{|z|<|t|<|y_Q|} \frac{d\mu(t)}{|t|^n}
&\leq\int_{|x_Q|/2<|t|<3|x_Q|/2} \frac{d\mu(t)}{|t|^n}
 \\
&\leq \frac{2^n}{|x_Q|^n}\int_{|t|<3|x_Q|/2} d\mu(t) 
\\
&= \frac{2^n}{|x_Q|^n}\mu(Q(3|x_Q|/2)
\leq \frac{3^n}{|\ell_Q|^n}|x_Q|^n \leq C,
\end{split}
\]
since $\mu$ is $n$-dimensional.

Thus, for the entire integral $I$, we have
\[
I\lesssim \int_Q d\mu(z)=\mu(Q),
\]
which completes the analysis of the second case.
Therefore, the oscillation condition from Definition~\ref{d_rbmo_A} 
is verified with parameter $\rho=5$.

\smallskip
\textbf{Step II.} 
On this step, we verify $K$-condition from
Definition~\ref{d_rbmo_A} with $\rho=5$.
Recall that $\ph_Q=\varphi(y_Q)$ for $y_Q\in \partial Q$ such that $|y_Q|=|x_Q|+\ell_Q/2$.

Consider cubes $Q\subset R$. Let $k$ be the minimal integer such that
$Q_0:=2^kQ \supset R$.  
Let $\widetilde{Q}$, 
$\widetilde{Q}_0$ and $\widetilde{R}$ denote the smallest cubes centered at zero
and containing $Q$, 
$Q_0$ and $R$, respectively.

We have
\[
|\ph_Q-\ph_R|
\le |\ph_Q-\ph_{Q_0}|+|\ph_{Q_0}-\ph_R|=A+B.
\]

The cubes $Q_0$ and $Q$ are concentric, thus, $|y_{Q_0}|=|x_Q|+2^{k-1}\ell_Q$. 
Hence, 
\[
A=\left|\int_{|t|>|y_Q|} \frac{d\mu(t)}{|t|} -\int_{|t|>|y_{Q_0}|} \frac{d\mu(t)}{|t|^n} \right|
=\int_{|y_Q|<|t|<|y_{Q_0}|} \frac{d\mu(t)}{|t|^n} \approx K(\widetilde {Q}, \widetilde {Q}_0)
\]
by the dyadic decomposition of the corresponding integral  (cf.\ \cite{T}).

Now, consider two cases depending on the fulfillment of the condition
\[
2|y_Q|\geq |y_{Q_0}|.
\]

If $2|y_Q|\geq |y_{Q_0}|$, then the cubes $\widetilde {Q}$ and $\widetilde {Q}_0$ 
are comparable. Therefore,
\[
K(\widetilde {Q}, \widetilde {Q}_0)\leq C.
\]

If  $2|y_Q|< |y_{Q_0}|$,
then  $|x_Q|<\ell_{Q_0}/2$. Hence,
$0\in Q_0$. Therefore, $Q_0\subset \widetilde {Q}_0\subset 3Q_0$ and we have
\[
\begin{split}
K(\widetilde {Q}, \widetilde {Q}_0)
&\le  K(\widetilde {Q},3Q_0) \\
&\le K(Q,3Q_0) = K(Q,Q_0)+K(Q_0,3Q_0)\le K(Q,Q_0)+C\approx K(Q,R),
\end{split}
\]
since the cubes $Q_0$ and $R$ are comparable.

Next, we have
\[B=
|\ph_{Q_0}-\ph_R|
=\left|\int_{|t|>|y_{Q_0}|} \frac{d\mu(t)}{|t|^n} -\int_{|t|>|y_R|} \frac{d\mu(t)}{|t|^n} \right|
\approx K(\widetilde {R},\widetilde {Q}_0).\]

Consider two cases depending on the fulfillment of the condition
\[
2|y_R|\geq |y_{Q_0}|.
\]

If $2|y_R|\geq |y_{Q_0}|$, then the cubes $\widetilde {R}$ and $\widetilde {Q}_0$ are comparable. 
Hence,
\[K(\widetilde {R},\widetilde {Q}_0)\leq C.\]

Suppose that $2|y_R|< |y_{Q_0}|$.
Since $R\supset Q$ and the cubes $Q$ and $Q_0$ are concentric, we have
\[|x_Q|<|y_R|<|y_{Q_0}|/2=1/2(|x_Q|+\ell_{Q_0}/2).\]
Therefore, $|x_Q|< \ell_{Q_0}/2$ and  $0\in Q_0$. 
Thus, $Q_0\subset \widetilde {Q}_0\subset 3Q_0$ and we obtain
\[
B\leq K(\widetilde {R}, \widetilde {Q}_0)\leq  K(R,3Q_0)
= K(R,Q_0)+K(Q_0,3Q_0)\leq C,
\]
since the cubes $Q_0$ and $R$ are comparable.

\smallskip
\textbf{Step III.}
On this step, we assume that
$\qe$ is a centered at the origin $(\alpha, \beta)$-doubling cube, $\alpha\ge 5$, that is,
$\mu(5\qe) \le \beta \mu(\qe)$. We have
\[
|\langle\ph\rangle_{\qe}| - |\ph_{\qe}| 
\le \frac{1}{\mu(\qe)}\int_{\qe} |\ph - \ph_{\qe}|\, d\mu 
\le \frac{\mu(5\qe)}{\mu(5\qe)\mu(\qe)}\int_{\qe} |\ph - \ph_{\qe}|\, d\mu 
\le \beta C
\]
by the oscillation property proven on Step~I.

Recall that $\ph_\qe=\varphi (y_\qe)$. 
The cube $\qe$ is centered at the origin, thus, $y_\qe = \ell_\qe /2$.
Hence, by Remark~\ref{r_r4}, we obtain
\[
\varphi(y_\qe)=\int_{|y_\qe|<|t|} \frac{d\mu(t)}{|t|^n} \approx K(\qe).
\]
Therefore, 
\[
|\ph_{\qe}| \ge |\langle\ph\rangle_{\qe}| - C\beta \ge C\KK(\qe) - C\beta,
\]
which completes the proof of the lemma.
\end{proof}

\begin{lemma}\label{l_test22}
For a sufficiently small constant $c>0$, let $\varphi_x = c\varphi(\cdot-x)$, $x\in\rd$,
$\alpha\ge 5$ and $\beta> \alpha^n$.
Then $\|\varphi_x\|_* \le 1$. 
There exist constants $C$ and $C(\alpha, \beta)>0$ such that 
\[
|\langle\ph_x\rangle_Q |\ge C K(Q) - C(\alpha, \beta)
\]
uniformly with respect to $x \in\rd$ 
for every $(\alpha, \beta)$-doubling cube $Q$ centered at $x$.
 \end{lemma}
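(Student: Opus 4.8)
The plan is to obtain Lemma~\ref{l_test22} from Lemma~\ref{l_test} by moving the origin to the point $x$ and invoking the translation invariance of all the objects involved. Fix $x\in\rd$ and let $\mu_x$ be the image of $\mu$ under the translation $z\mapsto z-x$, so that $\mu_x(E)=\mu(E+x)$. Translations are isometries of $(\rd,|\cdot|)$ sending cubes to cubes of the same side length and the center $x$ to the origin; hence $\mu_x$ is again a finite positive $n$-dimensional measure with $\mu_x(\rd)=\mu(\rd)$ and with the \emph{same} constant $C$ in \eqref{e_ndim}, and $z\mapsto z-x$ identifies the doubling cubes, the coefficients $\KK(\cdot,\cdot)$, the stopping times defining $\KK(\cdot)$, the averages $\langle\,\cdot\,\rangle$ and the space $\rbmo$ for $\mu$ with the corresponding objects for $\mu_x$. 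Let $\ph^{(\mu_x)}$ be the function produced by \eqref{e_ph_df} with $\mu_x$ in place of $\mu$; then $\varphi_x=c\,\ph^{(\mu_x)}(\,\cdot-x)$ is precisely the function in the statement, and an $(\alpha,\beta)$-doubling cube $Q$ centered at $x$ corresponds under $z\mapsto z-x$ to an $(\alpha,\beta)$-doubling cube $S=Q-x$ centered at the origin, with $\KK(S)=\KK(Q)$ and $\langle\ph^{(\mu_x)}\rangle_S=\tfrac1c\langle\varphi_x\rangle_Q$.

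First I would apply Lemma~\ref{l_test} to $\mu_x$. It gives $\ph^{(\mu_x)}\in\rbmo(\mu_x)$ together with $|\langle\ph^{(\mu_x)}\rangle_S|\ge C\KK(S)-C(\alpha,\beta)$ for every $(\alpha,\beta)$-doubling cube $S$ centered at the origin, where $C$ and $C(\alpha,\beta)$ depend only on $n$, $m$, the constant in \eqref{e_ndim}, $\alpha$ and $\beta$ — quantities that are unchanged when $\mu$ is replaced by $\mu_x$. Transporting this estimate back along $z\mapsto z-x$ and absorbing the fixed factor $c$ into the constants yields $|\langle\varphi_x\rangle_Q|\ge C\KK(Q)-C(\alpha,\beta)$ for every $(\alpha,\beta)$-doubling cube $Q$ centered at $x$, and uniformity in $x$ is automatic, since none of these constants depend on $x$.

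It remains to bound $\|\varphi_x\|_*=\|\varphi_x\|_{\mathfrak{E}}+\|\varphi_x\|_{L^1(\mu)}$ by $1$, uniformly in $x$, by taking $c$ small. For the seminorm, Steps~I and II of the proof of Lemma~\ref{l_test} verify \eqref{e_A_osc} and \eqref{e_A_K} for $\ph^{(\mu_x)}$ with $\rho=5$ and a constant depending only on $n$, $m$ and the constant in \eqref{e_ndim}; by the equivalence of the seminorms on $\rbmo(\mu_x)$ this bounds $\|\ph^{(\mu_x)}\|_{\mathfrak{E}}$, hence $\|\varphi_x\|_{\mathfrak{E}}=c\,\|\ph^{(\mu_x)}\|_{\mathfrak{E}}\lesssim c$. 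For the $L^1$ part, since $\ph^{(\mu_x)}\ge1$, changing the order of integration in \eqref{e_ph_df} and using $\mu_x(Q(|t|))\le C|t|^n$ give
\[
\|\ph^{(\mu_x)}\|_{L^1(\mu_x)}
=\mu(\rd)+\int_{\rd}\frac{\mu_x(Q(|t|))}{|t|^n}\,d\mu_x(t)
\le C\,\mu(\rd),
\]
so $\|\varphi_x\|_{L^1(\mu)}=c\,\|\ph^{(\mu_x)}\|_{L^1(\mu_x)}\lesssim c$ uniformly in $x$. Choosing $c>0$ small enough (depending only on $n$, $m$, the constant in \eqref{e_ndim} and $\mu(\rd)$) then gives $\|\varphi_x\|_*\le1$ for all $x\in\rd$.

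I do not expect a genuine obstacle: the statement is just the translation-invariant reformulation of Lemma~\ref{l_test}. The only point that really needs care — and the one I would check in detail — is that every constant entering Lemma~\ref{l_test}, the equivalence of the $\rbmo$ seminorms, and the $L^1$ bound depends only on data ($n$, $m$, the dimensional constant, the parameters $\alpha,\beta$, and $\mu(\rd)$) that is left invariant under translating $\mu$; this is exactly what upgrades the conclusions to hold uniformly in $x\in\rd$.
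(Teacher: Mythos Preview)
Your reduction to Lemma~\ref{l_test} via translation has a genuine gap: the identification $\varphi_x = c\,\ph^{(\mu_x)}(\cdot-x)$ is false. By definition,
\[
\varphi(y-x)=1+\int_{|t|>|y-x|}\frac{d\mu(t)}{|t|^n},
\qquad
\ph^{(\mu_x)}(y-x)=1+\int_{|t|>|y-x|}\frac{d\mu_x(t)}{|t|^n}
=1+\int_{|s-x|>|y-x|}\frac{d\mu(s)}{|s-x|^n},
\]
and these differ both in the weight ($|t|^{-n}$ versus $|s-x|^{-n}$) and in the region of integration. The point is that the function $\varphi$ is built from the \emph{fixed} measure $\mu$; translating its argument is not the same as translating the measure. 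Your isometry $\rbmo(\mu)\leftrightarrow\rbmo(\mu_x)$ is correct and yields $\|\varphi_x\|_{\mathfrak E,\mu}=c\,\|\varphi\|_{\mathfrak E,\mu_x}$, but Lemma~\ref{l_test} applied to $\mu_x$ controls $\|\ph^{(\mu_x)}\|_{\mathfrak E,\mu_x}$, not $\|\varphi\|_{\mathfrak E,\mu_x}$. The same mismatch spoils the lower bound for $|\langle\varphi_x\rangle_Q|$.

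The paper's argument avoids this by not translating at all: it simply reruns Steps~I--III of Lemma~\ref{l_test} with the origin replaced by $x$ (i.e., with $|y|$, $|t|$, $Q(r)$ replaced by $|y-x|$, $|t|$, and cubes centered at $x$). Every constant produced there depends only on $n$, $m$, the growth constant in \eqref{e_ndim}, and $(\alpha,\beta)$, so the bounds are automatically uniform in $x$. Your $L^1$ computation is essentially the paper's \eqref{e_L1} once you apply Fubini directly to $\varphi(\cdot-x)$ rather than to $\ph^{(\mu_x)}$; the fix for the rest is to argue in the same direct way instead of invoking Lemma~\ref{l_test} as a black box.
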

\begin{proof}
Repeating the arguments from the proof of Lemma~\ref{l_test} for an arbitrary point $x\in\rd$, 
we see that the constants at all steps can be chosen independently of $x$.
This observation provides the required estimate for $|\langle\ph_x\rangle_Q |$ as well as
a bound for the oscillation and $K$-condition. 
To estimate $\|\varphi_x\|_*$, it remains to obtain a uniform bound for $\|\varphi_x\|_{L^1(\mu)}$.

Applying Fubini's theorem, we obtain
\begin{equation}\label{e_L1}
\begin{split}
\left\| \ph_x(y) -1 \right\|_{L^1(\mu)}=
   \int_{\rd} \int_{|y-x|<|t|} \frac{d\mu(t)}{|t|^n} d\mu(y) 
&= 
   \int_{\rd} \int_{|y-x|<|t|} \frac{d\mu(y)}{|t|^n} d\mu(t)
 \\
&\le \int_{\rd} C d\mu(t) \le C,
\end{split}
\end{equation}
since $\mu$ is an $n$-dimensional measure and $\mu(\rd)=1$.
Observe that the above argument does not depend on $x\in\rd$.
Thus, multiplying $\ph$ by a sufficiently small constant,
we guarantee that $\|\ph_x\|_* \le 1$.
The proof of the lemma is complete.
\end{proof}

\begin{lemma}\label{c_test}
There exists a family $\{\varphi_{x}\}_{x\in\rd}$ of test functions such that
$\|\varphi_x\|_* \le 1$ and
\[
|\langle\ph_x\rangle_{2Q}| \ge C \KK(2Q) - C
\]
for every cube $Q\in\dou$ centered at $x$.
\end{lemma}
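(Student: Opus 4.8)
The plan is to deduce Lemma~\ref{c_test} directly from Lemma~\ref{l_test22}, the only work being to pass from a $(10,\beta_n)$-doubling cube $Q$ to the concentric cube $2Q$ and to check that $2Q$ satisfies the hypotheses of that lemma. So first I would fix the parameters once and for all: take $\alpha = 5$ and $\beta = \max\{\beta_n, 6^n\}$, so that $\alpha\ge 5$ and $\beta > \alpha^n$, and let $\{\varphi_x\}_{x\in\rd}$ be the family of test functions produced by Lemma~\ref{l_test22} for these values of $\alpha$ and $\beta$. In particular, $\|\varphi_x\|_* \le 1$ for every $x\in\rd$, which is the first assertion of the lemma.

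Next I would verify that $2Q$ is $(\alpha,\beta)$-doubling whenever $Q\in\dou$. Let $Q\in\dou$ be centered at $x$; then $x\in\operatorname{supp}(\mu)$, and the concentric cube $2Q$ is again a legitimate cube centered at $x$. By Definition~\ref{d_doubling} we have $\mu(10Q) < \beta_n\mu(Q)$, and since $Q\subset 2Q$ we have $\mu(Q)\le\mu(2Q)$; hence
\[
\mu\bigl(5(2Q)\bigr) = \mu(10Q) < \beta_n\mu(Q) \le \beta_n\mu(2Q) \le \beta\,\mu(2Q),
\]
so $2Q$ is an $(\alpha,\beta)$-doubling cube centered at $x$.

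Finally I would apply Lemma~\ref{l_test22} with the cube $2Q$ in place of $Q$, which gives
\[
|\langle\varphi_x\rangle_{2Q}| \ge C\,K(2Q) - C(\alpha,\beta).
\]
Since $\alpha$ and $\beta$ have been fixed as absolute constants, $C(\alpha,\beta)$ is itself an absolute constant $C$, and this is exactly the required estimate. There is essentially no genuine obstacle in this argument; the only point that needs care is the bookkeeping in the choice of $\beta$: one must take $\beta$ large enough that $\beta\ge\beta_n$ (so that the $(5,\beta_n)$-doubling property of $2Q$ upgrades to $(5,\beta)$-doubling, as required by Lemma~\ref{l_test22}) while still keeping $\beta > 5^n$, which is why the value $\beta = \max\{\beta_n, 6^n\}$ is convenient.
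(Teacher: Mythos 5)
Your proof is correct and follows essentially the same approach as the paper: observe that $\mu(5\cdot 2Q)=\mu(10Q)<\beta_n\mu(Q)\le\beta_n\mu(2Q)$, so $2Q$ is a $(5,\beta_n)$-doubling cube centered at $x$, and then apply Lemma~\ref{l_test22}. The extra bookkeeping with $\beta=\max\{\beta_n,6^n\}$ is harmless but unnecessary, since the definition of $(\alpha,\beta)$-doubling already forces $\beta_n>10^n>5^n$.
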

\begin{proof}
By definition, property
$Q\in\dou$ means that $Q$ is a $(10,\beta_n)$-doubling cube.
Therefore, applying Lemma~\ref{l_test22} to $2Q$, a $(5, \beta_n)$-doubling cube,
completes the proof.
Indeed, the constant $\beta_n$ depends on $\mu$ only.
\end{proof}

\section{Proof of Theorem~\ref{t_main}}\label{s_main}

{(ii)$\Rightarrow$(i)} 
This implication is proven in \cite{DV23PAMS} up to the following details:
the arguments in \cite{DV23PAMS} are given for other doubling cubes,
namely, 
by definition, these are $(4, \beta_n)$-doubling cubes.
In fact, the proof remains the same for $(\alpha, \beta_n)$-doubling cubes, $\alpha\ge 4$. 
Thus, to prove the required implication, it is enough to repeat the argument from \cite{DV23PAMS} for $\alpha=10$, 
using the definition of the doubling cubes from the present work.

{(i)$\Rightarrow$(ii)} 
For two doubling cubes $Q\subset R\subset\rd$, 
 we have to verify that properties~\eqref{e_main_osc} and \eqref{e_main_K} hold with appropriate 
 constants $b_Q, b_R$.
Without loss of generality, we assume that $\KK(Q)$ is sufficiently large.

Let $x$ denote the center of $Q$ and let 
$\ph=\ph_x$ denote the test function provided by Lemma~\ref{c_test}.
By property~(i), we have $\|T\ph\|_* \le C \|\ph\|_*$, in particular,
\[
\|T\ph\|_{\mathfrak{}} \le C \|\ph\|_* \le C
\]
Therefore, there exist constants  
$\beta_{Q}$ and $\beta_R$ such that
\begin{align}
 \frac{1}{\mu(Q)}\int_Q |T \ph - \beta_{Q}|\, d\mu
&\le C,\label{e_oscTph}\\
 |\beta_{Q} - \beta_{R}|
&\le C\KK(Q, R), \label{e_K_Tph}
\end{align}
where the constant $C>0$ does not depend on $Q$ and $R$.

Applying the construction from Section~\ref{ss_cstr}, we obtain a decomposition
\begin{equation}\label{e_ph_decomp}
\ph = \ph_{2Q} + \ph_{2} + \ph_{3}
\end{equation}
and constants $b_{2, Q}$ and $b_{3, Q}$.
Observe that, generally speaking, $\ph_{2Q}$ differs from the analogous constant constructed for the cube $2Q$
in the proof of Lemma~\ref{l_test}.
By \eqref{e_A_osc_2Q} with $\ph$ in the place of $f$, we have
\[
\frac{1}{\mu(4Q)} \int_{2Q} |\ph - \ph_{2Q}|\, d\mu \le 2\|\ph\|_{\mathfrak{A}, 2}.
\]
The cube $Q$ is doubling, thus, $\mu(4Q) \le \mu(10 Q) \le \beta_n \mu(Q)\le \beta_n \mu(2Q)$, hence,
\[
|\langle\ph\rangle_{2Q}| - |\ph_{2Q}| \le \frac{\mu(4Q)}{\mu(2Q)\mu(4Q)}\int_{2Q} |\ph - \ph_{2Q}|\, d\mu 
\le 2\beta_n \|\ph\|_{\mathfrak{A}, 2}.
\]
Therefore, by Lemma~\ref{c_test}, we have
\begin{equation}\label{e_ph2Q}
|\ph_{2Q}| \ge |\langle\ph\rangle_{2Q}| - C\|\ph\| \ge C\KK(2Q) - C - C \ge C\KK(2Q),
\end{equation}
since $\KK(2Q)$ is assumed to be sufficiently large.

Finally, we put 
\[
\gamma_Q= \beta_{Q} - b_{2, Q} - b_{3, Q}
\]
and we directly check the conditions
\eqref{e_main_osc} and \eqref{e_main_K}.

\subsubsection*{Oscillation condition \eqref{e_main_osc}}
Put
$
b_Q = {\gamma_{Q}}/{\ph_{2Q}}.
$
Then we have
\[
\begin{split}
\frac{\ph_{2Q}}{\mu(Q)}\int_Q \left| T 1 - b_Q \right|\, d\mu  
&=
\frac{1}{\mu(Q)}\int_Q |T\ph - \beta_Q  + b_{2, Q} - T\ph_2 + b_{3, Q} - T\ph_3|\, d\mu
\\
&\le C 
\end{split}
\]
by decomposition \eqref{e_ph_decomp}, property \eqref{e_oscTph} and Lemma~\ref{l_23}. Thus,
\begin{equation}\label{e_oscT1}
\frac{1}{\mu(Q)}\int_Q \left| T 1 - b_Q \right| \, d\mu\le
\frac{C}{|\ph_{2Q}|} \le \frac{C}{\KK(Q)}
\end{equation}
by \eqref{e_ph2Q}.
Therefore, property \eqref{e_main_osc} holds, as required.

\subsubsection*{$\KK$-condition \eqref{e_main_K}}
Applying the definitions of $b_Q$ and $b_R$, we have
\[
\begin{aligned}
 |b_Q - b_R|
\le \left|\frac{\gamma_{Q}}{\ph_{2Q}} - \frac{\gamma_{R}}{\ph_{2Q}}\right| +
 \left|\frac{\gamma_{R}}{\ph_{2Q}}- \frac{\gamma_{R}}{\ph_{2R}}\right| 
&= \frac{|\gamma_{Q} - \gamma_{R}|}{|\ph_{2Q}|} +
 |b_{R}|\frac{|\ph_{2R} - \ph_{2Q}|}{|\ph_{2Q}|} \\
&:= A +B.
 \end{aligned}
\]
Combining Lemmas~\ref{l_23_K} and \ref{l_test} and properties~\eqref{e_K_Tph} and \eqref{e_ph2Q}, we obtain
\[
A \le C\frac{\KK(Q,R)}{\KK(2Q)}.
\]
Since $T 1 \in L^\infty (\mu)$, property \eqref{e_oscT1} implies the estimate $|b_R|\le C$.
Hence, applying \eqref{e_A_K_2Q} and the estimate
 $\|\ph\|_{\mathfrak{A}, 2}\le C$, we obtain
\[
B \le 
 C\frac{\KK(2Q, 2R)}{\KK(2Q)}.
\]
In sum, we have
\[
|b_Q - b_R| \le A +B \le C \frac{\KK(Q,R)}{\KK(Q)},
\]
that is, condition~\eqref{e_main_K} holds.
The proof of the theorem is finished.

\bibliographystyle{amsplain}

\end{document}